\theoremstyle{plain} \numberwithin{equation}{section}
\newtheorem{theo}{Theorem}[section]
\theoremstyle{definition}
\newtheorem*{defi}{Definition}
\newtheorem*{thm}{Theorem}
\def\C{\mathbb C}
\def\CS{\mathbb C^*}
\def\CZN{\C^n\times\Z^n}
\def\N{\mathbb N}
\def\R{\mathbb R}
\def\T{\mathbb T}
\def\Z{\mathbb Z}
\def\b{b}
\def\v{v}
\def\i{\sqrt{-1}}
\def\ME{\mathcal E}
\def\MO{\mathcal O}
\def\MR{\mathcal R}
\def\OPHI{\overline{\phi}}
\def\De{\Delta}
\def\la{\lambda}
\def\RP{\R_{>0}}
\def\Si{\Sigma}
\def\bp{\beta_{I}^{\perp}}
\def\1{\mathbf 1}
\def\0{\mathbf 0}
\DeclareMathOperator{\Ker}{Ker}
\DeclareMathOperator{\Hom}{Hom}
\DeclareMathOperator{\diag}{diag}
\DeclareMathOperator{\GL}{GL}
\DeclareMathOperator{\TKVB}{TKVB}
\DeclareMathOperator{\TKVS}{TKVS}
\DeclareMathOperator{\SKVS}{SKVS}
\DeclareMathOperator{\Spec}{Spec}
\begin{document}

\title{Equivariant vector bundles over topological toric manifolds }
\author[Y.Cui, A. Gholampour]{Yong Cui and Amin Gholampour} 
\address{Department of Mathematics, University of Maryland, Maryland, MD, USA}
\email{cuiyong@umd.edu, amingh@umd.edu}

\begin{abstract}
We prove that every topological/smooth $\T=(\C^{*})^{n}$-equivariant vector bundle over a topological toric manifold of dimension $2n$ is a topological/smooth Klyachko vector bundle in the sense of \cite{Cui25}.
\end{abstract}

\maketitle

\tableofcontents
\thispagestyle{empty}

\section{Introduction}
The equivariant algebraic vector bundles over nonsingular complete toric varieties were completely classified in \cite{Kly90} by Klyachko. In \cite{Cui25}, the first author generalized Klyachko's result to a special class of equivariant topological/smooth vector bundles over topological toric manifolds called topological/smooth Klyachko vector bundles. 

A \emph{topological toric manifold} is a closed smooth real manifold of dimension $2n$ with an effective smooth action of $\T=(\CS)^{n}$ having an open dense orbit, which can be covered by finitely many invariant open subsets each equivariantly diffeomorphic to a direct sum of complex one-dimensional smooth representation spaces of $\T$ (see \cite{{IFM12}} and Section \ref{background} for more details). The topological toric manifolds are a differential geometric generalization of nonsingular complete toric varieties over $\C$, and their geometry is completely controlled by their topological fans. To each top dimensional cone $I$ in the fan, there corresponds an affine chart $U_{I}$. This is similar to the correspondence between cones $\sigma$ and affine charts $U_{\sigma}$ in toric geometry. However, the transition functions between $U_I$'s are given by the smooth characters of $\T$, which are in general far from being algebraic. 

A \emph{topological/smooth Klyachko vector bundle} is simply a continuous/smooth $\T$-equivariant vector bundles over a topological toric manifold $X$ that is $\T$-equivariantly trivial over all charts $U_{I}$. In \cite{Cui25}, we asked the following question: \\
$ $\\
\noindent \textbf{Question: }
 Is every  $\T$-equivariant vector bundle over a topological toric manifold a Klyachko vector bundle?   \\
$ $\\
In this paper, we will provide an affirmative answer to this question. In the case of toric vector bundles over nonsingular toric varieties, this was proven in Proposition 2.1.1. of~\cite{Kly90}. Let $\sigma$ be a cone in the fan of a complete toric variety $X$, $U_\sigma=\Spec\C[\sigma^{\vee}]$ be the affine toric variety corresponding to $\sigma$, and $x_{\sigma}$ be an arbitrary point in the orbit $\MO_{\sigma}$ corresponding to $\sigma$ fixed by the subtorus $\T_\sigma<\T$. For a toric vector bundle  $\ME$ over $X$ with the fiber  $\ME(x_{\sigma})$ over  over $x_{\sigma}$, let 
\begin{align*}
    p:\Gamma(U_\sigma,\ME)&\longrightarrow \ME(x_{\sigma}) \qquad 
    s \longmapsto s(x_{\sigma})
\end{align*}
be the evaluation map at $x_\sigma$. A key fact used in Klyachko's proof of equivariant triviality of $\ME|_{U_\sigma}$ is that for any $\T_\sigma$-eigenvector $v\in \ME(x_{\sigma})$, there is an eigensection $s\in \Gamma(U_\sigma,\ME)$, such that $p(s)=v$. This follows from the following two theorems and the surjectivity of $p$, which is clear as $p$ is the canonical projection onto a quotient of the $\C[\sigma^\vee]$-module $\Gamma(U_\sigma,\ME(x_\sigma))$.
\begin{thm} [Lemma* on page 25 of \cite{MFK94}]
   Any rational representation of a linear algebraic group is a union of finite-dimensional invariant subspaces. 
\end{thm}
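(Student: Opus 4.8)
The plan is to argue directly from the comodule description of a rational representation. Recall that a rational representation of a linear algebraic group $G$ over the base field $k$ on a (possibly infinite-dimensional) vector space $V$ amounts to a linear coaction $\mu\colon V\to V\otimes_k k[G]$ satisfying the counit and coassociativity axioms, with the underlying $G$-action recovered by $g\cdot v=(\mathrm{id}_V\otimes\mathrm{ev}_g)(\mu(v))$ for $g\in G$, where $\mathrm{ev}_g\colon k[G]\to k$ is evaluation at $g$. Equivalently — and this is all the argument really needs — for every $v\in V$ there are finitely many vectors $v_1,\dots,v_m\in V$ and regular functions $f_1,\dots,f_m\in k[G]$ with $g\cdot v=\sum_{i=1}^m f_i(g)\,v_i$ for all $g\in G$. (When $V=k[X]$ for an affine $G$-variety $X$, this is just the statement that the action morphism $G\times X\to X$ induces an algebra map $k[X]\to k[G]\otimes_k k[X]$.)

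Given this, I would proceed as follows. Fix $v\in V$, choose an expression $g\cdot v=\sum_{i=1}^m f_i(g)\,v_i$ as above, and, after discarding superfluous terms, assume $f_1,\dots,f_m$ are linearly independent over $k$. Set $W_v:=\operatorname{span}_k\{\,g\cdot v : g\in G\,\}$, the smallest $G$-stable subspace containing $v$. The displayed formula gives $W_v\subseteq\operatorname{span}_k\{v_1,\dots,v_m\}$, so $\dim_k W_v\le m<\infty$; and $W_v$ is $G$-invariant because $h\cdot(g\cdot v)=(hg)\cdot v\in W_v$ for all $g,h\in G$, while $v=e\cdot v\in W_v$. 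Hence every element of $V$ lies in a finite-dimensional $G$-invariant subspace, and $V=\bigcup_{v\in V}W_v$ exhibits $V$ as a union of finite-dimensional invariant subspaces.

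The one point that needs slightly more care — and which I regard as the only content beyond formal bookkeeping — is upgrading each $W_v$ to an honest rational \emph{sub}representation, i.e.\ checking that $\mu$ restricts to a coaction $W_v\to W_v\otimes_k k[G]$, so that the whole decomposition lives inside the category of rational representations (which is what one wants in applications such as the evaluation-map argument recalled in the introduction). For this I would apply coassociativity $(\mu\otimes\mathrm{id})\circ\mu=(\mathrm{id}\otimes\Delta)\circ\mu$ to $v$ and use that $k[G]$ is a free $k$-module, together with the linear independence of the $f_i$ arranged above: comparing the two sides forces $\mu(v_i)\in W_v\otimes_k k[G]$ for each $i$, hence $\mu(W_v)\subseteq W_v\otimes_k k[G]$. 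Since the present paper invokes only the union statement — finite-dimensionality plus $G$-invariance — this last step can be omitted if desired.
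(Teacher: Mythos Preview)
The paper does not supply a proof of this statement: it is quoted verbatim from \cite{MFK94} as background, in order to explain why the evaluation-map argument works in the algebraic setting that the present paper is \emph{contrasting} with. There is therefore no ``paper's own proof'' to compare against.

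That said, your argument is the standard one and is correct. One small point: having arranged the $f_i$ to be linearly independent, the coassociativity computation you outline yields $\mu(v_j)\in\operatorname{span}_k\{v_1,\dots,v_m\}\otimes_k k[G]$, i.e.\ that $W':=\operatorname{span}_k\{v_1,\dots,v_m\}$ is a subcomodule, rather than directly that $\mu(v_j)\in W_v\otimes_k k[G]$. Of course $W_v\subseteq W'$, and over a field with enough rational points (in particular over an algebraically closed field, which is the setting of \cite{MFK94}) the linear independence of the $f_i$ lets you invert a Vandermonde-type matrix to get each $v_i$ as a combination of translates $g_j\cdot v$, whence $W'=W_v$. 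Either way, $v$ lies in a finite-dimensional $G$-invariant subcomodule, which is all that is needed.
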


\begin{thm} [{\cite[Theorem 3.2.3]{Spr09}}]
    Any finite dimensional rational representation of a diagonalizable linear algebraic group $G$ is a direct sum of one dimensional such representations.
\end{thm}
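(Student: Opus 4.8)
The plan is to exploit the defining property of a diagonalizable group $G$: its coordinate ring $\C[G]$ is the group algebra of the character group $X=X^{*}(G)=\Hom(G,\CS)$. Concretely, the characters $\chi\in X$ form a $\C$-basis of $\C[G]$ (their linear independence being Dedekind's lemma on independence of characters), multiplication of functions corresponds to addition in $X$, and under the comultiplication $\mu\colon\C[G]\to\C[G]\otimes\C[G]$ and counit $\epsilon\colon\C[G]\to\C$ each character is grouplike, $\mu(\chi)=\chi\otimes\chi$ and $\epsilon(\chi)=1$, simply because $\chi$ is a homomorphism into $\CS$. I would take this Hopf-algebraic description of $\C[G]$ as the starting point; establishing it is exactly where the hypothesis of diagonalizability is used.

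Next I would translate the representation into comodule language. A finite-dimensional rational representation $\rho\colon G\to\GL(V)$ is the same datum as a comodule structure map $\Delta_{V}\colon V\to V\otimes\C[G]$ satisfying coassociativity $(\Delta_{V}\otimes\mathrm{id})\Delta_{V}=(\mathrm{id}\otimes\mu)\Delta_{V}$ and the counit axiom $(\mathrm{id}\otimes\epsilon)\Delta_{V}=\mathrm{id}_{V}$. Using the basis $X$ of $\C[G]$, I write $\Delta_{V}(v)=\sum_{\chi\in X}v_{\chi}\otimes\chi$ with uniquely determined $v_{\chi}\in V$ (all but finitely many zero), and set $V_{\chi}=\{v\in V:\Delta_{V}(v)=v\otimes\chi\}$.

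The heart of the argument is a coefficient comparison. Applying the two sides of coassociativity to $v$ and reading off coefficients of $\psi\otimes\chi$ in the last two tensor slots — legitimate because $\{\psi\otimes\chi\}$ is a basis of $\C[G]\otimes\C[G]$ — yields $(v_{\chi})_{\psi}=\delta_{\psi\chi}\,v_{\chi}$, which says precisely that $v_{\chi}\in V_{\chi}$ and that $v\mapsto v_{\chi}$ is idempotent. The counit axiom then gives $v=\sum_{\chi}\epsilon(\chi)\,v_{\chi}=\sum_{\chi}v_{\chi}$. Hence $V=\sum_{\chi}V_{\chi}$, and the sum is direct because the $v_{\chi}$ are uniquely determined. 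Finally, on $V_{\chi}$ the group acts through the scalar character, $g\cdot v=\chi(g)v$, so every line in $V_{\chi}$ is $G$-stable; choosing a basis of each $V_{\chi}$ and collecting the resulting lines exhibits $V$ as a direct sum of one-dimensional rational representations, each affording some $\chi\in X$.

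I expect the only genuine obstacle to be the structural input in the first step, namely that $\C[G]$ is spanned by characters with $X$ a basis; everything afterward is formal manipulation of the comodule axioms. A self-contained alternative that sidesteps the Hopf-algebra bookkeeping is the purely linear-algebraic route: since $G$ is diagonalizable it is commutative and consists of semisimple elements, so in any rational representation the operators $\{\rho(g)\}_{g\in G}$ form a commuting family of diagonalizable endomorphisms of $V$; a commuting family of diagonalizable operators over the algebraically closed field $\C$ is simultaneously diagonalizable, and a common eigenbasis decomposes $V$ into $G$-stable lines, the joint eigenvalue $g\mapsto(\text{eigenvalue on a given line})$ being automatically a rational character. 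The point requiring care there is that semisimplicity of elements of $G$ is preserved under the morphism $\rho$, i.e.\ the compatibility of Jordan decomposition with morphisms of linear algebraic groups.
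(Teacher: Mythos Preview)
Your argument is correct: the comodule decomposition $V=\bigoplus_{\chi\in X}V_\chi$ via the coefficient comparison is exactly the standard proof, and the alternative via simultaneous diagonalization of a commuting family of semisimple operators is also valid once one invokes preservation of Jordan decomposition under morphisms of algebraic groups. Note, however, that the paper does not supply its own proof of this statement; it is quoted as a background result with a citation to \cite[Theorem~3.2.3]{Spr09}, so there is nothing in the paper to compare your proof against. Your first approach is in fact essentially the argument given in the cited reference.
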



In the case of an equivariant vector bundle $\ME$ over an affine chart $U_{I}$ of a topological toric manifold $X$, $\Gamma(U_{I},\ME)$ is a \emph{continuous/smooth representation space} of $\T$ to which Theorems above do not apply. 
Moreover, the surjectivity of the evaluation map $p:\Gamma(U_{I},\ME)\to \ME(x_{I})$ is not clear to us in general.  We will bypass these difficulties by considering the fixed point $x_{I}$ of each affine chart $U_{I}$ and constructing a $G=(S^{1})^{n}$-eigenframe on a small neighborhood of $x_{I}$, and extending this frame to a $\T$-eigenframe defined on $U_{I}$. 

 The paper is organized as follows: In §1, we review the construction of topological toric manifolds. In §2, we state the orbit-cone correspondence for topological toric manifolds and define topological/smooth Klyachko vector bundles. In §3, we prove our main result that every topological/smooth $\T$-equivariant vector bundle is a topological/smooth Klyachko vector bundle.
\bigskip

\noindent
{\bf Acknowledgment.}  
The first author would like to thank Professor Sam Payne for answering many questions about toric vector bundles during the preparation of this paper.
\bigskip

\section{Background on topological toric manifolds} \label{background}
Topological toric manifolds are generalizations of smooth toric varieties. Like toric varieties, they are determined by topological toric fans. Here we review the construction in \cite{IFM12}.

 Let $S$ be the ring of $2\times 2$ matrices of the form $\begin{bmatrix}
     b &0\\
     c &v
 \end{bmatrix}$ where $b,c\in \R,v\in \Z$.
Denote the set $\C\times \Z$  by $\MR$. We define a ring structure on $\MR$ via the bijection $\MR\to S$ sending $\mu=(b+\i c,v)$ to $\begin{bmatrix}
     b &0\\
     c &v
 \end{bmatrix}$, i.e. 
 \begin{align*}
  \mu_1+\mu_2&:=((b_1+b_2)+\sqrt{-1}(c_1+c_2),v_1+v_2)\\  
  \mu_1\mu_2&:=(b_1b_2+\sqrt{-1}(b_1c_2+c_1v_2),v_1v_2).
 \end{align*}
 for $\mu_i=(b_i+\sqrt{-1}c_i,v_i),i=1,2.$\\
 For $g\in \CS, \mu=(b+\i c,v)\in \MR$, define 
 \[
 g^\mu:=|g|^{b+\i c}(\frac{g}{|g|})^v,
 \]
then $(g^{\mu_1})^{\mu_2}=g^{\mu_2\mu_1}$.\\
The identity element $\1=(1,1)$ of $\MR$ corresponds to the identity matrix in $S$.
The ring $\MR^{n}$ plays the roles of the (co)character group  for topological toric manifolds.
 
Let $G$ be an abelian topological group. A continuous (resp.\hspace{0.1cm}smooth) character of $G$ is a continuous (resp.\hspace{0.1cm}smooth) homomorphism $G\to \CS$. A continuous (resp.\hspace{0.1cm}smooth) cocharacter of $G$ is a continuous (resp.\hspace{0.1cm}smooth) homomorphism $\CS\to G$. \\
 For $\alpha=(\alpha^1,\dots,\alpha^n)\in \MR^n$ and $\beta=(\beta^1,\dots,\beta^n)\in \MR^n$, define a smooth character $\chi^\alpha\in \Hom((\CS)^n,\CS)$ and a smooth cocharacter $\la_\beta\in \Hom(\CS,(\CS)^n)$ via
 \[
 \chi^\alpha(g_1,\dots,g_n):=\prod_{k=1}^n g_k^{\alpha^k},  \qquad\la_\beta(g):=(g^{\beta^1},\dots,g^{\beta^n}).
 \]

 All continuous characters (resp. cocharacters) of $\T$ are of the form $\chi^{\alpha}$ (resp. $\la_{\beta}$). In particular, every continuous character of $\T$ is smooth.    
There is a bracket operation on $\MR$ defined by 
 \[
 \langle \alpha,\beta\rangle:=\sum_{k=1}^n \alpha^k\beta^k\in \MR.
 \]
 Notice that in general $\langle \alpha,\beta\rangle\neq \langle \beta,\alpha\rangle$ because the matrix product is not commutative.
 
 For $\{\alpha_i\}_{i=1}^{n}$ and $\{\beta_i\}_{i=1}^{n}$ with $\alpha_i,\beta_i\in \MR^n$, we define group endomorphisms $\oplus_{i=1}^{n}\chi^{\alpha_{i}}$ and $\prod_{i=1}^{n}\la_{\beta_{i}}$ of $(\CS)^{n}$ by 
\begin{equation} 
\begin{split}
&(\bigoplus_{i=1}^n\chi^{\alpha_i})(g_1,\dots,g_n):=(\chi^{\alpha_1}(g_1,\dots,g_n),\dots, \chi^{\alpha_n}(g_1,\dots,g_n))\\
&(\prod_{i=1}^n\la_{\beta_i})(g_1,\dots,g_n):=\prod_{i=1}^n\la_{\beta_i}(g_i).
\end{split}
\end{equation}

The \emph{dual} of a family $\{\beta_{i}\}_{i=1}^{n}$ of $n$ elements in $\MR^{n}$ is by definition another family of $n$ elements in $\MR^{n}$ $\{\alpha_i\}_{i=1}^{n}$ such that 
$$\langle \alpha_{i},\beta_{j}\rangle=\delta_{ij}\1\quad \text{for all $i,j$}.$$
If we write $\beta_i=(b_i+\sqrt{-1}c_i,v_i)$ and $\{b_i\}_{i=1}^n$ and $\{v_i\}_{i=1}^n$ are bases of $\R^n$ and $\Z^n$, respectively then $\{\beta_i\}_{i=1}^n$ has a dual $\{\alpha_i\}_{i=1}^n$. In this case,
If $\{\alpha_i\}_{i=1}^n$ is dual to $\{\beta_i\}_{i=1}^n$, then the composition 
$$\big(\prod_{i=1}^n \la_{\beta_i}\big)\big(\bigoplus_{i=1}^n\chi^{\alpha_i}\big)\colon (\C^*)^n\to (\C^*)^n$$ 
is the identity,  in particular, both $\bigoplus_{i=1}^n \chi^{\alpha_i}$ and $\prod_{i=1}^n \la_{\beta_i}$ are automorphisms of $(\C^*)^n$.

\begin{defi}
An \emph{abstract simplicial complex} $\Si$ is a collection of non-empty finite subsets of a fixed set, such that for every set $I\in \Si$ and $J\subset I$, we have $J\in \Si$. Let $\Si$ be an abstract simplicial complex. If $\Si$ contains only finitely many elements, then it is called a finite abstract simplicial complex. The dimension of $I\in \Si$ is $|I|-1$. If $|I|\leq n$ for all $I\in \Si$ and there exists $I_{0}\in \Si$ such that $|I_{0}|=n$, then $\Si$ is called an abstract simplicial complex of dimension $n-1$. An augmented abstract finite simplicial complex $\Si$ of dimension $n-1$ is a set $\Si'\bigcup \{\emptyset\}$ where $\Si'$ is an abstract finite simplicial complex of dimension $n-1$.
For $i\in \N$, denote by $\Si^{(i)}$ the set of elements $I\in \Si$ such that $|I|=i$. In particular, we denote the vertex set by $\Si^{(1)}$.
\end{defi}

\begin{defi}
Let $\Si$ be an augmented abstract finite simplicial complex of dimension $n-1$ and let 
\[
\beta\colon \Si^{(1)} \to \MR^n=\CZN
\]
where $\Si^{(1)}$ denotes the vertex set of $\Si$.  We abbreviate an element $\{i\}\in\Si^{(1)}$ as $i$ and $\beta(\{i\})$ as $\beta_i$ and express $\beta_i=(b_i+\sqrt{-1}c_i,v_i)\in\CZN$. Denote the cone spanned by $b_{i},i\in I$ by $\angle b_{I}$. Then the pair $(\Si,\beta)$ is called a \emph{(simplicial) topological fan} of dimension $n$ if the following are satisfied.
\begin{enumerate}
\item  $b_i$'s for $i\in I$ are linearly independent whenever $I\in \Si$, and $\angle \b_I\cap \angle \b_J=\angle \b_{I\cap J}$ for any $I,J\in \Si$.
(In short, the collection of cones $\angle b_I$ for all $I\in \Si$ is an ordinary simplicial fan although $\b_i$'s are not necessarily in $\Z^n$.) 
\item Each $\v_i$ is primitive and $v_i$'s for $i\in I$ are linearly independent (over $\R$) whenever $I\in \Si$.  
\end{enumerate}     
A topological fan $\De$ of dimension $n$ is \emph{complete} if $\bigcup_{I\in \Si}\angle \b_I=\R^n$ and \emph{non-singular} if the $\v_i$'s for $i\in I$ form a part of a $\Z$-basis of $\Z^n$ whenever $I\in\Si$.  
\end{defi}

\noindent 
{\bf Construction.}
Let $\De=(\Si,\beta)$ be a non-singular topological fan of dimension $n$. We take the vertex set $\Si^{(1)}$ as $[m]=\{1,2,\dots,m\}$. By definition, for all $I\in \Si$ we must have $I\subset [m]$. For $I\subset [m]$, we set $I^{c}:=[m]\setminus I$ and \[U(I):=\{(z_1,\dots,z_m)\in \C^m\mid z_i\not=0 \ \text{for $\forall i\notin I$}\}=\C^{I}\times (\CS)^{I^{c}}.\]
Note that $U(I)\cap U(J)=U(I\cap J)$ for any $I,J\in [m]$ and $U(I)\subset U(J)$ if and only if $I\subset J$.  We define 
\[
U(\Sigma):=\bigcup_{I\in \Si}U(I).
\]
Let 
\[
\la:(\C^*)^m\to (\C^*)^n
\]
be the homomorphism defined by
\[
\la(h_1,\dots,h_m):=\prod_{k=1}^m\la_{\beta_k}(h_k).
\]

It was shown in Lemma 4.1 of \cite{IFM12} that
$\lambda$ is surjective and
\begin{equation*}\label{kerl}
\Ker\la=\{(h_1,\dots,h_m)\in (\C^*)^m
\mid \prod_{k=1}^mh_k^{\langle \alpha,\beta_k\rangle}=1\quad \text{for any $\alpha\in \MR^n$}\}.
\end{equation*}
 For any $I\in \Si^{(n)}$ the dual of the family $\{\beta_i\}_{i\in I}$ is denoted by $\{\alpha^I_i\}_{i\in I}$. We then have  
\begin{equation*} \label{aibk}
\Ker\la=\{(h_1,\dots,h_m)\in (\C^*)^m \mid h_i\prod_{k\notin I}h_k^{\langle \alpha^I_i,\beta_k\rangle}=1\quad \text{for any $i\in I$}\}.
\end{equation*}

Denote $U(\Si)/\Ker \la $ by $X(\De)$, $U(I)/\Ker\la$ by $U_{I}$, and the representation space of $\bigoplus_{i\in I}\chi^{\alpha^{I}_{i}}$ by $\bigoplus_{i\in I}V(\chi^{\alpha^{I}_{i}})$. 
In Lemmas 5.1 and 5.2 of \cite{IFM12}, it is shown that
   $X(\De)$ is a smooth real manifold of dimension $2n$ with a smooth $\T=(\CS)^n\simeq (\CS)^m/\Ker \la$-action, whose equivariant local charts are the maps \[\varphi_{I}:U_{I}\to \bigoplus_{i\in I}V(\chi^{\alpha^{I}_{i}}) \qquad
[z_1,\dots,z_m]\mapsto (\prod_{k=1}^{m}z_{k}^{\langle \alpha^{I}_{i},\beta_{k}\rangle})_{i\in I} \] for all $I\in \Si^{(n)}$, and whose  transition functions are given by $$\varphi_{J}\varphi_{I}^{-1}:\varphi_{I}(U_{I}\cap U_{J})\to \varphi_{J}(U_I\cap U_{J}) 
\qquad (w_i)_{i\in I} \mapsto (\prod_{i\in I}w_{i}^{\langle \alpha^{J}_{j},\beta_{i}\rangle})_{j\in J}. $$ for all $I,J\in \Si^{(n)}$. In Section 3 of \cite{IFM12}, it is proven that that any topological toric manifold $X$, up to equivariant diffeomorphism, is of the form  $X(\De)$ for some complete nonsingular topological fan $\De$.

\bigskip
\section{Klyachko vector bundles}
Let $\De=(\Si,\beta)$ be a non-singular topological fan of dimension $n$.
For each $I\in \Si$, we call $$\gamma_{I}=[0^{I}\times 1^{I^c}]\in U(I)/\Ker \la$$ the \emph{distinguished point corresponding to $I$}. We denote the $\T$-orbit  of $\gamma_I$ by $$\mathcal{O}(I):=\T \cdot \gamma_{I}=(0^{I}\times (\C^{*})^{I^c})/\Ker\la,$$
and its closure in $X(\De)$ by 
$$V(I):=\overline{\MO(I)}=(0^{I}\times \C^{I^c})/\Ker\la.$$\\
The following theorem is a direct generalization of Theorem 3.2.6 of \cite{CLS}.
\begin{theo}[{\cite[Theorem 2.1]{Cui25}}] \label{orbit-cone}
$ $\\
    \begin{enumerate}
        \item There is a bijective correspondence 
            \begin{align*}
            \Si &\longleftrightarrow \{\T\text{-orbits in } X(\De)\}\qquad 
            I \mapsto \MO(I)
            \end{align*}
        \item $\forall\; J\in \Si^{(j)}\quad  \dim_{\C}\MO(J)=n-j$.
        \item $\displaystyle U_{I}=\bigcup_{J\leq I}\MO(J).$
        \item $J\leq I \quad \Leftrightarrow\quad \MO(I)\subset V(J)$, and in this case  $\displaystyle V(J)=\bigcup_{J\leq I}\MO(I).$
    \end{enumerate}
\end{theo}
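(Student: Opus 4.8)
The plan is to transport the entire statement to the quotient presentation $X(\De)=U(\Si)/\Ker\la$, $\T=(\C^*)^m/\Ker\la$ recalled above, and to deduce all four assertions from the $(\C^*)^m$-orbit stratification of $\C^m$ — exactly as in the proof of \cite[Theorem 3.2.6]{CLS}. The only thing to check is that passing to the smooth/topological category and replacing monomials by the smooth characters $g\mapsto g^{\mu}$ changes nothing relevant, which is clear since $g^{\mu}\neq 0$ whenever $g\in\C^*$. Concretely, I would begin by recording, for $S\subseteq[m]$, the $(\C^*)^m$-orbit $O_S=\{z\in\C^m\mid z_i=0\iff i\in S\}$, with $\dim_\R O_S=2(m-|S|)$; since $O_S\subseteq U(I)=\C^I\times(\C^*)^{I^c}$ iff $S\subseteq I$, and (as $\Si$ is closed under subsets and contains $\emptyset$) this holds for some $I\in\Si$ iff $S\in\Si$, one obtains the stratifications $U(\Si)=\bigsqcup_{S\in\Si}O_S$, $U(I)=\bigsqcup_{S\subseteq I}O_S$, and $(0^J\times\C^{J^c})\cap U(\Si)=\bigsqcup_{S\in\Si,\,S\supseteq J}O_S$.

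For part (1) I would use that the quotient map $q\colon U(\Si)\to X(\De)$ is open and intertwines the $(\C^*)^m$-action with the $\T$-action: each $q(O_S)$ is then a single $\T$-orbit, equal to $\MO(S)$ by the definitions of Section 2; every $\T$-orbit arises this way because any $[z]\in X(\De)$ has a representative in some $O_S$ with $S\in\Si$; and $S\mapsto\MO(S)$ is injective since points of distinct $O_S$'s cannot differ by an element of $\Ker\la\subseteq(\C^*)^m$. For part (2), note that $\la\colon(\C^*)^m\to(\C^*)^n$ is a surjective Lie-group homomorphism, hence a submersion, so $\Ker\la$ is closed of real dimension $2(m-n)$ and acts freely and properly on $U(\Si)$ (this is exactly what underlies the fact, recalled from \cite{IFM12}, that $X(\De)$ is a $2n$-manifold); restricting to $O_J$ for $J\in\Si^{(j)}$ then gives $\dim_\R\MO(J)=\dim_\R O_J-\dim_\R\Ker\la=2(n-j)$. (If one prefers not to invoke freeness globally, the stabilizer of $\gamma_J$ can be computed directly: it is governed by the image of $\Ker\la$ under the projection $(\C^*)^m\to(\C^*)^{J^c}$, and $\Ker\la\cap(\C^*)^J$ is finite because $\{b_k\}_{k\in J}$ and $\{v_k\}_{k\in J}$ are linearly independent by the fan axioms.)

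Part (3) then falls out by applying $q$ to $U(I)=\bigsqcup_{S\subseteq I}O_S$, giving $U_I=\bigcup_{J\le I}\MO(J)$. For part (4) I would set $W:=q\big((0^J\times\C^{J^c})\cap U(\Si)\big)=\bigcup_{J\le S}\MO(S)$; its $q$-preimage is closed in $U(\Si)$ and $\Ker\la$-saturated, and since $q$ is an open surjection it sends closed saturated sets to closed sets, so $W$ is closed and $\overline{\MO(J)}\subseteq W$. For the reverse inclusion, for each $I\ge J$ I would exhibit the path $t\mapsto z(t)\in\C^m$, $t\in[0,1]$, with coordinates $0$ on $J$, $t$ on $I\setminus J$, and $1$ on $I^c$: then $z(t)\in O_J\subseteq U(\Si)$ for $t>0$ while $z(0)=0^I\times 1^{I^c}\in U(\Si)$, so $\gamma_I\in\overline{\MO(J)}$ by continuity of $q$, hence $\MO(I)=\T\cdot\gamma_I\subseteq\overline{\MO(J)}$. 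Therefore $\overline{\MO(J)}=W=\bigcup_{J\le I}\MO(I)$, which also recovers the description of $V(J)$ used in Section 2; and since the orbits $\MO(I)$ are disjoint and indexed bijectively by $\Si$, one has $\MO(I)\subseteq V(J)$ iff $I\ge J$, giving the equivalence in (4).

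The main obstacle is really just bookkeeping: granting the quotient presentation, the surjectivity of $\la$, and the free proper $\Ker\la$-action from \cite{IFM12}, parts (1)--(3) are a purely formal transfer along $q$. The single step that is not pure orbit combinatorics is the orbit-closure computation in (4): pinning down $\overline{\MO(J)}$ on the nose needs both the explicit degeneration of $z(t)$ onto $0^I\times1^{I^c}$ for the lower bound and the "open quotient maps preserve closed saturated sets" observation for the upper bound, and this is essentially the only place where the topology of $X(\De)$, as opposed to the combinatorics of $\Si$, enters.
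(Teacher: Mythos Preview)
The paper does not supply its own proof of this theorem: it is quoted verbatim from \cite[Theorem~2.1]{Cui25} and only accompanied by the remark that it is a direct generalization of \cite[Theorem~3.2.6]{CLS}. Your proposal carries out precisely that generalization---pulling the $(\C^*)^m$-orbit stratification of $\C^m$ along the quotient $q\colon U(\Si)\to X(\De)$ and using the freeness/properness of the $\Ker\la$-action from \cite{IFM12}---and the argument is correct as written, so there is nothing substantive to compare.
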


In \cite{Cui25}, the first author classified topological/smooth/holomorphic Klyachko vector bundles over a topological toric manifold. 
\begin{defi}
A \emph{topological Klyachko vector bundle} of rank $r$ over a $2n$-dimensional topological toric manifold $X$ associated to a nonsingular topological fan $\Delta=(\Sigma,\beta)$ is a topological $\T$-equivariant complex vector bundle $\ME\to X$, whose restriction $\left. \ME \right|_{U_{I}}$ is equivariantly homeomorphic to $U_{I}\times \C^{r}$ for all $I\in \Si$, and whose action map $\T\times \ME\to \ME$ is continuous.
Denote by $\TKVB_{X}$ the category, whose objects are continuous Klyachko vector bundles over $X$, and whose morphisms are $\T$-equivariant continuous morphisms of complex vector bundles.
A \emph{smooth Klyachko vector bundle} is defined similarly by replacing continuous with smooth.
\end{defi}
\begin{defi} \label{geq}
We can define a partial order $\geq_{s}$ on $\MR$ as follows: For $\alpha=(b+c\sqrt{-1},v)\in \MR$, we say $\alpha \geq_{s} 0=(0+0\sqrt{-1},0)$ if $c=0, b\in \N, b\pm v\in 2\N$. For $\alpha,\beta\in \MR$, we say $\alpha\geq_{s}\beta$ if $\alpha-\beta\geq_{s} 0$.
For a topological toric manifold $X$, denote by $\SKVS_{X}$  the category, whose objects are complex vector spaces $E$ equipped with a poset $P$ of subspaces $E^{i}(\mu)$ of $E$ indexed by $\mu\in \MR$, $i\in \Si^{(1)}$ satisfying the following compatibility condition:\\
    For any $I\in \Si$ and $\beta^\perp_I:=\{\alpha\in \MR^m \mid \langle \alpha ,\beta_i\rangle=0 \;\forall i\in I\}$, there exists a $\MR^{n}/\bp$-grading   \[E=\bigoplus_{[\chi]\in \MR^{n}/\bp}E_{[\chi]} \quad \text{ for which } \quad
    E^{i}(\mu)=\sum_{\langle \chi,\beta_{i} \rangle \geq_{c}\mu}E_{[\chi]} \; \forall i\in I.\]
    A morphism $f:(E,\{E^{i}(\mu)\})\to (F,\{F^{i}(\mu)\})$ in $\TKVS_{X}$ is a $\C$-linear transformation $f:E\to F$ such that $f(E^{i}(\mu))\subset F^{i}(\mu)$ for all $i\in \Si^{(1)}$ and $\mu\in \MR$. 
\end{defi}

\begin{theo} [{\cite[Theorem 3.6]{Cui25}}]
    The category $\TKVB_{X}$ of topological Klyachko vector bundles over the topological toric manifold $X$ is equivalent to the category $\TKVS_{X}$. 
\end{theo}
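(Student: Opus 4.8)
The plan is to follow Klyachko's original strategy \cite{Kly90} and produce an explicit pair of functors $F\colon\TKVB_X\to\TKVS_X$ and $G\colon\TKVS_X\to\TKVB_X$ together with natural isomorphisms $G\circ F\cong\mathrm{id}$ and $F\circ G\cong\mathrm{id}$. The one ingredient of Klyachko's proof that must be re-established in the topological setting is the following ``triangularity'' principle: if an equivariant bundle $\ME$ is equivariantly trivial over a chart $U_I$ (i.e.\ $\ME|_{U_I}\cong U_I\times W_I$ with $\T$ acting diagonally for some finite-dimensional representation $W_I$), then, written over the open orbit in weight coordinates, any transition between two equivariant trivializations of $\ME|_{U_I}$ is block upper-triangular for the ordering of $\T$-weights $\chi\in\MR^n$ induced by $\langle\,\cdot\,,\beta_i\rangle$ and $\geq_c$; equivalently, a weight monomial extends continuously (resp.\ smoothly) across an orbit closure $V(\{i\})$ exactly when the corresponding element of $\MR$ is $\geq_c 0$. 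In the algebraic case this is the statement about nonnegative orders of vanishing; here it should be a direct computation with the functions $g^{\mu}=|g|^{b+\i c}(g/|g|)^{v}$ of Section \ref{background} and the transition formulas recorded there.

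Granting this, I would define $F$ as follows. For $\ME\in\TKVB_X$ set $E:=\ME_{\gamma_\emptyset}$, the fiber over the distinguished point of the open orbit; since $\gamma_\emptyset$ lies in the free orbit, $\ME$ is canonically equivariantly trivial over $\T\gamma_\emptyset$. For $i\in\Si^{(1)}$ choose any face $I\in\Si$ with $i\in I$, pick an equivariant trivialization $\ME|_{U_I}\cong U_I\times W_I$, decompose $W_I=\bigoplus_{\chi\in\MR^n}(W_I)_\chi$ into $\T$-weight spaces, and let $E^i(\mu)\subseteq E$ be the subspace corresponding to $\sum_{\langle\chi,\beta_i\rangle\geq_c\mu}(W_I)_\chi$ under the canonical identification $E\cong W_I$ read off over the open orbit. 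The triangularity principle shows $E^i(\mu)$ is independent of the trivialization, and independence of $I$ follows by restricting to $U_{I\cap I'}=U_I\cap U_{I'}$ (here $i\in I\cap I'$, and this chart still meets $\MO(\{i\})$ by the orbit–cone correspondence, Theorem \ref{orbit-cone}). The compatibility condition of Definition \ref{geq} holds because for a maximal cone $I\in\Si^{(n)}$ one has $\bp=0$, so the full weight decomposition of $W_I$ transported to $E$ is a grading of the required type, and every $I\in\Si$ is a face of a maximal cone by completeness of the fan; functoriality of $F$ on morphisms is immediate, an equivariant bundle map inducing a weight-respecting map of fibers over $\gamma_\emptyset$.

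For $G$, given $(E,\{E^i(\mu)\})$ I would reassemble the bundle chart by chart: over each $U_I$ with $I\in\Si^{(n)}$ take the trivial bundle $U_I\times E$ and twist the fibrewise $\T$-action by the grading $E=\bigoplus_{[\chi]}E_{[\chi]}$ supplied by the compatibility condition, using the local model $\varphi_I\colon U_I\xrightarrow{\ \sim\ }\bigoplus_{i\in I}V(\chi^{\alpha^I_i})$ and the automorphisms $\prod_i\la_{\beta_i}$ and $\bigoplus_i\chi^{\alpha_i}$ of $(\C^*)^n$ to write it down explicitly. On an overlap $U_I\cap U_J=U_{I\cap J}$, glue $U_I\times E$ to $U_J\times E$ by the map determined by the two gradings and the coordinate change $(w_i)_{i\in I}\mapsto(\prod_i w_i^{\langle\alpha^J_j,\beta_i\rangle})_{j\in J}$; the compatibility condition — imposed for every face, in particular for $I\cap J$ — forces both gradings to be refined by the grading attached to $I\cap J$, and the triangularity principle then makes these transition maps continuous, $\T$-equivariant, and cocyclic. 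By construction the resulting bundle is equivariantly trivial over every $U_I$, hence lies in $\TKVB_X$, and $G$ is functorial.

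I expect the main obstacle to be precisely the triangularity principle of the first paragraph — establishing, outside the algebraic category, when weight monomials and equivariant transition maps extend across orbit closures, so that the $E^i(\mu)$ in $F$ are well defined and the glued bundle in $G$ exists — together with the bookkeeping of the cocycle condition for $G$ over triple overlaps, where one cannot invoke a ring of regular functions. Once $F$ and $G$ are in place the natural isomorphisms should be routine: $F\circ G\cong\mathrm{id}$ by tracing the fiber over $\gamma_\emptyset$ and its weight data through both constructions, and $G\circ F\cong\mathrm{id}$ by using the equivariant trivializations of $\ME$ over the $U_I$ to match $\ME$ with the bundle reassembled from its weight decompositions, the matchings agreeing on overlaps since they all come from those trivializations.
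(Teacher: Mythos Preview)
The paper does not prove this statement at all: it is quoted verbatim from \cite[Theorem~3.6]{Cui25} as background, with no argument given. There is therefore nothing in the present paper to compare your proposal against. Your sketch is a plausible outline of the Klyachko-style argument one would expect to find in \cite{Cui25}, but any assessment of whether it matches ``the paper's own proof'' would have to be made against that reference, not against the paper in front of us.
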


\cite[Theorem 4.5]{Cui25} proves a parallel classification for the smooth Klyachko vector bundles.


\section{Equivariant triviality}
In this section, we prove the main result of the paper. For the benefit of the reader, we first treat the case of line bundles, which already involves most of the ideas. We then modify the proof in the line bundle case to obtain our result for all ranks. 

\noindent \textbf{Notation.} We will use $\cdot$ to denote the action of the torus $\T=(\C^{*})^n$ on itself, on a given $2n$-dimensional topological toric manifold $X$, and also on the fibers of an equivariant vector bundle $\ME\to X$.  
\bigskip
\begin{theo} \label{main}
    Let $X$ be the $2n$-dimensional topological toric manifold associated with a topological fan $\De$ and $\pi: \ME\to X$ a topological $\T$-equivariant complex line bundle, then $\ME$ is a topological Klyachko line bundle.
\end{theo}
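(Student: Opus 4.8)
The plan is to prove that $\ME|_{U_I}$ is $\T$-equivariantly trivial for each top-dimensional cone $I\in\Si^{(n)}$, which is precisely the condition for $\ME$ to be a topological Klyachko line bundle. Fix such an $I$ and its distinguished fixed point $x_I=\gamma_I\in U_I$; under the chart $\vph_I$ we may identify $U_I$ with a direct sum $W=\bigoplus_{i\in I}V(\chi^{\alpha^I_i})\cong\C^n$ of complex one-dimensional smooth $\T$-representations, with $x_I$ corresponding to the origin $\0$. Since $\ME$ is a line bundle, the fiber $\ME(x_I)$ is a one-dimensional $\T$-representation, hence also a one-dimensional $G$-representation for the compact subtorus $G=(S^1)^n<\T$; pick a nonzero vector $v\in\ME(x_I)$. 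The strategy is: first build a $G$-eigensection of $\ME$ on a small $\T$-invariant neighborhood of $x_I$ extending $v$, then show this eigensection extends to all of $U_I$ and is nowhere vanishing, and finally upgrade the $G$-equivariance to full $\T$-equivariance.

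First I would construct a local $G$-eigenframe near $x_I$. Because $G$ is compact, by averaging one can choose a $G$-invariant neighborhood of $x_I$ on which $\ME$ is $G$-equivariantly trivial; more precisely, starting from any local trivialization and any local frame $s_0$ with $s_0(x_I)=v$, the averaged section $s(x):=\int_G g^{-1}\cdot\big(s_0(g\cdot x)\big)\,dg$ (with respect to Haar measure on $G$) is continuous, satisfies $s(x_I)=v$ since $v$ is a $G$-eigenvector only up to the character by which $G$ acts on $\ME(x_I)$ — so in fact one should average against that character, setting $s(x):=\int_G \overline{\eta(g)}\, g^{-1}\cdot s_0(g\cdot x)\,dg$ where $\eta$ is the $G$-character on $\ME(x_I)$ — and then $s$ is a $G$-eigensection of weight $\eta$ with $s(x_I)=v\neq 0$, hence nonvanishing on a smaller $G$-invariant neighborhood $N$ of $x_I$. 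Next, I would propagate this eigensection over the whole chart using the $\C^*$-factors: every point of $W\setminus\{$coordinate hyperplanes$\}$ lies in the dense orbit, and a general point of $W$ is obtained from a point of $N$ by scaling coordinates via an element of $\T$; one uses the $\T$-action to transport $s$, checking that the prescription $\tilde s(t\cdot x):=t\cdot s(x)$ is well-defined. The point is that the isotropy behaviour is controlled by the orbit-cone correspondence (Theorem \ref{orbit-cone}): for $x$ in the orbit $\MO(J)$ with $J\le I$, the stabilizer acts on $\ME(x)$ through a character compatible with $\eta$, because connectivity of the orbit and continuity force the $G$-weight to be locally constant, while the fixed point $x_I$ pins it down. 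This globalizes $s$ to a continuous nowhere-vanishing section $\tilde s$ on $U_I$.

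The final step is to check $\T$-equivariance rather than merely $G$-equivariance of $\tilde s$. Since $\T=G\times\RP^n$ where $\RP^n=(\R_{>0})^n$ acts, and $\tilde s$ was defined by transporting along all of $\T$, $\T$-equivariance should hold by construction; the content is to verify that transporting along $G$ agrees with the original $G$-eigensection, i.e.\ that no monodromy obstruction arises. Here continuity is essential: the $\RP^n$-directions are contractible, so any a priori discrepancy between the two $G$-actions is a continuous $\CS$-valued function on a connected space that is $1$ near $x_I$, hence identically $1$. Once $\tilde s$ is a global $\T$-eigenframe of $\ME|_{U_I}$, the map $U_I\times\C\to\ME|_{U_I}$, $(x,\zeta)\mapsto\zeta\,\tilde s(x)$ is a $\T$-equivariant homeomorphism (equivariant because $\tilde s$ is an eigensection, up to absorbing the weight $\eta$ into the standard action on the $\C$ factor), proving $\left.\ME\right|_{U_I}\cong U_I\times\C$ equivariantly. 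Doing this for every $I\in\Si^{(n)}$, and noting that for lower-dimensional $J\in\Si$ one has $U_J\subset U_I$ for any $I\ge J$, gives triviality over all charts, so $\ME$ is a topological Klyachko line bundle.

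The main obstacle I anticipate is the globalization step: extending the locally-defined $G$-eigensection from the small neighborhood $N$ across the coordinate hyperplanes of $W$ (equivalently, across the lower-dimensional orbits $\MO(J)$ for $\emptyset\neq J\subsetneq I$) while keeping it continuous and nonvanishing. On the open dense orbit the extension by the $\T$-action is forced and unproblematic, but one must argue that the resulting section extends continuously over the non-free locus and does not vanish there — this is exactly where, in the algebraic setting, Klyachko could invoke finite-dimensionality of sections and the structure theory of diagonalizable groups, tools unavailable here. I expect the resolution to rely on working chart-by-chart with the explicit smooth-representation model $\bigoplus_{i\in I}V(\chi^{\alpha^I_i})$ and on a careful continuity/connectedness argument using the compactness of $G$ to control weights, as flagged in the introduction.
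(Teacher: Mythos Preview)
Your overall strategy---average over $G=(S^1)^n$ to produce a local $G$-eigensection near the fixed point, then push it out to all of $U_I$ using the $\T$-action---is the same as the paper's. The genuine gap is in the push-out step. Your formula $\tilde s(t\cdot x):=t\cdot s(x)$ is not well-defined: the neighborhood $N$ meets each $\RP^n$-orbit in more than one point, and whenever $x,\, r\cdot x\in N$ with $r\in\RP^n$ you would need $s(r\cdot x)=r\cdot s(x)$, which has not been established. Twisting by the $G$-character $\eta$ does not repair this, because a character of $G$ fixes only the $\Z^n$ component of an element of $\MR^n=\C^n\times\Z^n$; there are continuum-many $\T$-characters extending $\eta$, and you give no mechanism for singling out the correct one. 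Your contractibility remark is a red herring: the discrepancy is encoded by the function $f(r,x)$ satisfying $s(r\cdot x)=f(r^{-1},x)\,r\cdot s(x)$, and contractibility of $\RP^n$ only makes $r\mapsto f(r,x)$ null-homotopic as a map to $\CS$, not identically $1$---indeed in general $f$ is a nontrivial character of $\RP^n$, determined by the line bundle.

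The paper's Step~3 supplies exactly this missing piece. One shows that $f(r,x)$ is multiplicative in $r$ and that $f(\rho,t\cdot x)=f(\rho,x)$ whenever both sides make sense; then continuity and the fact that any point of the open set $U\cap(\rho\cdot U)$ is a limit of $\T$-translates of any other (through the fixed point $x_0$) force $f(r,x)=f(r)$ to be independent of $x$. Combining $f$ with $\chi=\eta|_G$ yields a bona fide $\T$-character $\chi^\alpha(r\, e^{i\theta}):=f(r)\,\chi(e^{i\theta})$, and the well-defined extension is $s(t\cdot x):=\chi^\alpha(t^{-1})\,t\cdot s(x)$ for $x\in U$, which covers all of $U_I$ since $\T\cdot U=U_I$. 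Incidentally, the obstacle you flag at the end---continuity across the lower-dimensional orbits---is not where the difficulty actually sits; once $\chi^\alpha$ is in hand, the single extension formula above handles all orbits uniformly.
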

\begin{proof}
    Given $I\in \Si^{(n)}$, we will give a $\T$-equivariant trivialization of  $\left.\ME \right|_{U_{I}}$. Let $x_{0}$ be the unique point fixed by $\T$ in $U_{I}$. Let $$ G:=(S^{1})^{n}< \T, \qquad \chi:\T\to \operatorname{GL}_{1}(\ME(x_{0}))=\CS$$ be respectively the maximal compact subgroup and the representation of $\T$ on the fiber $\ME(x_{0})$ of $\ME$ over $x_{0}$. 
    \bigskip
    
    {\bf Step 1:} We first prove that there is a $G$-equivariant trivialization of $\ME$ $$\varphi:\pi^{-1}(U)\to U\times \C$$ for some $G$-invariant open subset $U\subset U_{I}$ containing $x_{0}$.
    
    Let $\phi:\pi^{-1}(W)\to W\times \C$ be a trivialization of $\ME$ (not necessarily $G$-equivariant) over an open neighborhood $W$ of  $x_{0}$. Let $\langle\;,\,\rangle$ be an arbitrary Riemannian metric on $X$. Because $G$ is compact, it admits a Haar measure $\mu$, i.e. a  probability measure satisfying $$\int_{G}f(kh)~d\mu(h)=\int_{G}f(hk)~d\mu(h)=\int_{G}f(h)~d\mu(h)$$ for all $k, h\in G$ and $f\in L^{1}(G,\mu)$. Let $\langle \;,\,\rangle_{G}$ be the $G$-invariant Riemannian metric on $X$ defined by 
    \[ \langle v,w\rangle_{G}:=\int_{G}\langle d\rho_{h}(v),d\rho_{h}(w)\rangle~ d\mu(h) \qquad \forall\; p\in X,\; v,w\in T_{p}X,\]
    where $\rho_{h}:X\to X$ is the action of $h\in G$ on $X$.
    Let $d(-,-)$ be the distance function associated $\langle \;,\,\rangle_{G}$. Define $U$ to be the $G$-invariant open ball \[B(x_{0},\epsilon):=\{x\in W\mid d(x,x_{0})<\epsilon\}\subset W\] for some $\epsilon>0$. Define a $G$-action on $U\times \C$ via
    \[g\cdot (u,v)=(g\cdot u, \chi(g)v) \text{ for } (u,v)\in U\times \C, \] where the action on the first factor is the restriction of the $\T$-action on $X$.
    Let $\phi_{u}:\ME(u)\to \C$ be the linear isomorphism induced from $\phi$ for each $u\in U$. Define 
    \begin{align*}
        \varphi: \pi^{-1}(U)&\longrightarrow U\times \C\\
        e&\longmapsto (u,\OPHI_{u}(e)),
    \end{align*}
    where \[\OPHI_{u}(e)= \int_{G}\chi(g^{-1})\phi_{g\cdot u}(g\cdot e)~d\mu(g) \text{ with } u=\pi(e).\]
    $\varphi$ is clearly an isomorphism of bundles. We claim  that $\varphi$ is $G$-equivariant. To see this, we need to show that
    \[\varphi(h\cdot e)=(h\cdot u, \OPHI_{h\cdot u}(h\cdot e))=(h\cdot u,\chi(h)\OPHI_{u}(e))=h\cdot \varphi(e) \qquad \forall\; h\in G,\; e\in \pi^{-1}(U). \]
    On the one hand, 
    \begin{align*}
      \OPHI_{h\cdot u}(h\cdot e)&= \int_{G}\chi(g^{-1})\phi_{gh\cdot u}(gh\cdot e)~d\mu(g)  \\
      &=\int_{G}\chi((kh^{-1})^{-1})\phi_{k\cdot u}(k\cdot e)~d\mu(kh^{-1})\qquad (k:=gh)\\
      &=\int_{G}\chi(hk^{-1})\phi_{k\cdot u}(k\cdot e)~d\mu(k) \qquad \text{(by the invariance of $\mu$)}.\\
    \end{align*}
    On the other hand, 
    \begin{align*}
        \chi(h)\OPHI_{u}(e)=\chi(h)\int_{G}\chi(k^{-1})\phi_{k\cdot u}(k\cdot e)~d\mu(k).
    \end{align*}
    Comparing the last two equalities, we prove the claim.
    
    \bigskip 
    
    {\bf Step 2:} Define the section $s:U\to \ME |_U$ by
    \[s(u):=\varphi^{-1}((u,1)) \qquad \forall \; u\in U. \]
    We claim that $s$ is a local $G$-eigenframe, i.e. 
    \[ (g\cdot s)(u)=\chi(g)s(u) \qquad \forall  \; g\in G, \;u\in U. \]
    To see this, note that since $\varphi$ is $G$-equivariant, we can write  
    \[\varphi(g\cdot (s(u)))= g\cdot \varphi(s(u))= g\cdot (u,1)= (g\cdot u, \chi(g)). \]
   Therefore, 
   \[g\cdot (s(u))= \varphi^{-1}(g\cdot u, \chi(g))=\chi(g)\varphi^{-1}(g\cdot u,1)=\chi(g)s(g\cdot u), \]
   and hence 
   \[(g\cdot s)(u):=g\cdot (s(g^{-1}\cdot u))=\chi(g)(s(gg^{-1}\cdot u))=\chi(g)s(u) \] proving the claim.

    \bigskip
    
    {\bf Step 3:} Decompose $t\in \T=(\CS)^{n}$ as $t=r\cdot e^{i\theta}$, where \[r:=(r_{1},\dots,r_{n})\in \RP^{n},\qquad e^{i\theta}:=(e^{i\theta_{1}},\dots,e^{i\theta_{n}})\in G.\] Our goal is to extend the local $G$-eigenframe $s$ from Step 2 to a $\T$-eigenframe $$s:U_{I}\to \ME|_{U_I}.$$ For this, we need to find an extension of $s$ to $U_I$, such that
    \[(t\cdot s)(x):=t\cdot (s(t^{-1}\cdot x))=\chi^{\alpha}(t) s(x) \qquad \forall\; t\in \T, \; x\in U_{I}. \]
    for the continuous character $\chi^{\alpha}:\T\to \CS$ corresponding to some $\alpha \in \MR^n$, or equivalently
    \[s(t\cdot x)=\chi^{\alpha}(t^{-1}) t\cdot (s(x)) \qquad \forall\; t\in \T,\; x\in U_{I} .\]
    In the following, we will show that 
    \begin{align} \label{equi}
        s(t\cdot x)=\chi^{\alpha}(t^{-1})t\cdot (s(x)) \qquad \forall \; t\in \T, \;x\in U \text{ such that } t\cdot x\in U .
    \end{align}
    Once this is proven, we can define $s:U_{I}\to \ME$ by 
    \[s(t\cdot x):=\chi^{\alpha}(t^{-1}) t\cdot (s(x)) \qquad \forall \; t\in \T,\; x\in U \]
    whether $t\cdot x\in U$ or not. Because $\T\cdot U=U_{I}$, this definition indeed extends the local $G$-eigenframe $s:U\to \ME|_U$ to the $\T$-eigenframe $s:U_{I}\to \ME$, as desired. This $\T$-eigenframe will then give a $\T$-equivariant trivialization of $\ME|_{U_I}$ completing the proof of the theorem. 

    We now return to proving \eqref{equi}. The local section $s:U \to \ME|_U$ from Step 2 satisfies
    \begin{align} \label{equi2}
         s(e^{i\theta}\cdot x)=\chi(e^{-i\theta}) e^{i\theta}\cdot (s(x)) \qquad \forall \; e^{i\theta}\in G, \; x\in U.
    \end{align}
    
    Define $f:S:=\{(\rho,x)\in \RP^{n}\times U|x\in \rho\cdot  U \}\to \CS$ to be the continuous function satisfying 
    \[(r\cdot s)(x):=r\cdot (s(r^{-1}\cdot x))=f(r,x)s(x) \qquad \forall \; r\in\RP^{n},\; x\in U\bigcap (r\cdot U), \]
    or equivalently,
    \begin{align}\label{equi3}
        s(r\cdot x)=f(r^{-1},x) r\cdot (s(x)) \qquad \forall \; r\in\RP^{n}, \;x\in U\bigcap (r^{-1}\cdot U).
    \end{align}
    Given $r_{1},r_{2}\in \RP^{n}, x\in (r_{1}\cdot U)\bigcap (r_{2}\cdot U)\bigcap (r_{1}\cdot r_{2}\cdot U)$, we have 
    \begin{align*}
       f(r_{1}\cdot r_{2},x)s(x)&=((r_{1}\cdot r_{2})\cdot s)(x)\\
       &=(r_{1}\cdot (r_{2}\cdot s))(x)\\
       &=f(r_{1},x)(r_{2}\cdot s)(x)\\
       &=f(r_{1},x)f(r_{2},x)s(x)
    \end{align*}
    and since $s(x)\neq 0,$ we must have \begin{equation} \label{fmultip} f(r_{1}\cdot r_{2},x)=f(r_{1},x)f(r_{2},x).\end{equation}
    Now for any $t=r\cdot e^{i\theta}\in \T,\; x\in U\bigcap (r^{-1}\cdot U)$, we have
\begin{align}
        s(t\cdot x)&=s(e^{i\theta}\cdot r\cdot x)\notag \\ 
        &=\chi(e^{-i\theta})e^{i\theta}\cdot (s(r\cdot x))\notag \\ 
        &=\chi(e^{-i\theta})e^{i\theta}\cdot f(r^{-1},x)r\cdot (s(x)) \qquad (\text{by \eqref{equi3}})\notag \\ 
        &=\chi(e^{-i\theta})f(r^{-1},x) t\cdot(s(x))  \label{eq:final}
        \end{align}    
    
    and for $t=r\cdot e^{i\theta}\in \T,\;\rho\in \RP^{n},\; x\in U\bigcap( r^{-1}\cdot U)\bigcap (r^{-1}\cdot \rho \cdot  U)$,
    \begin{align*}
       f(\rho,t\cdot x)s(t\cdot x)&=(\rho\cdot s)(t\cdot x)\qquad (\text{by the definition of $f$})\\&=  \rho \cdot(s(\rho^{-1}\cdot t\cdot x))\\
        &=\rho \cdot (\chi(e^{-i\theta})f( r^{-1}\cdot \rho,x)\rho^{-1}\cdot t\cdot (s(x))) \qquad (\text{by \eqref{eq:final}})\\
        &=f(\rho,x) \chi(e^{-i\theta})f(r^{-1},x)t\cdot (s(x)) \qquad (\text{by \eqref{fmultip}}\\
        &=f(\rho,x)s(t\cdot x) \qquad (\text{by \eqref{eq:final}}).
    \end{align*}
     We have shown that \[f(\rho,t\cdot x)=f(\rho,x) \qquad \forall \; t\in \T,\; x\in U\bigcap (\rho\cdot  U) \text{ such that } t\cdot x\in U\bigcap (\rho\cdot  U).\] Given $x,y\in U\bigcap (\rho\cdot  U)$, there exists a sequence of points $t_{i}\in \T$ such that $$\lim_{i\to \infty}t_{i}\cdot x=y \quad \text{ and }\quad  t_{i}\cdot x\in U\bigcap (\rho\cdot  U).$$ By the continuity of $f$,
    \[f(\rho,y)=f(\rho,(\lim_{i\to \infty}t_{i})\cdot x)=\lim_{i\to \infty}f(\rho,t_{i}\cdot x)=f(\rho,x). \]
    Thus $f(\rho,x)$ is independent of $x$, so  we will denote it simply by $f(\rho)$. We can now rewrite ~(\ref{equi3}) as 
    \begin{align} \label{equi4}
         s(r\cdot x)=f(r^{-1})r\cdot (s(x)) \qquad \forall\;r\in\RP^{n},\; x\in U\bigcap (r^{-1}\cdot U).
    \end{align}
    Using ~(\ref{equi2}) and ~(\ref{equi4}), we see that ~(\ref{equi}) holds with the continuous $\T$-character
    \[\chi^{\alpha}(r\cdot e^{i\theta}):=f(r) \chi(e^{i\theta}) \qquad \forall \; r\in \RP^{n},\; \theta\in \R^{n} \] as claimed.
\end{proof}
\bigskip

\begin{theo} \label{main2}
    Let $X$ be the $2n$-dimensional topological toric manifold associated with a topological fan $\De$ and $\pi: \ME\to X$ a topological $\T$-equivariant complex vector bundle of rank $k$, then $\ME$ is a topological Klyachko vector bundle.
\end{theo}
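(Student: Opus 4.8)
The plan is to rerun the three steps of the proof of Theorem~\ref{main}, replacing each rank-one object by its $\GL_k(\C)$-valued counterpart; the one genuinely new ingredient is that a finite-dimensional continuous representation of the maximal compact torus decomposes into characters. Fix $I\in\Si^{(n)}$, let $x_0$ be the point of $U_I$ fixed by $\T$, let $G=(S^1)^n<\T$, and, after a choice of basis of the fiber $\ME(x_0)$, let $\chi\colon\T\to\GL_k(\C)$ be the representation of $\T$ on $\ME(x_0)$. \textbf{Step 1} is the Haar-averaging construction of Theorem~\ref{main} read with matrices in place of scalars: the average $\OPHI_u(e)=\int_G\chi(g^{-1})\phi_{g\cdot u}(g\cdot e)\,d\mu(g)$ keeps its form (now $\chi(g^{-1})$ acts on $\C^k$), and the same verification produces a $G$-equivariant trivialization $\varphi\colon\pi^{-1}(U)\to U\times\C^k$ over a $G$-invariant ball $U=B(x_0,\epsilon)$, with $g\cdot(u,v)=(g\cdot u,\chi|_G(g)v)$; in the smooth case $\varphi$ is smooth because the integrand is. \textbf{Step 2}: since $G$ is a compact abelian Lie group, $\chi|_G$ preserves a Hermitian inner product on $\C^k$ and is therefore diagonalized, by a constant unitary change of basis, into a sum of characters; composing $\varphi$ with that matrix we may assume $\chi|_G(g)=\diag(\xi_1(g),\dots,\xi_k(g))$ with $\xi_j\in\Hom(G,S^1)$, and then $s_j(u):=\varphi^{-1}(u,e_j)$ is a local $G$-eigenframe of $\ME|_U$, i.e.\ $(g\cdot s_j)(u)=\xi_j(g)s_j(u)$ for all $j$, by the computation in Step~2 of Theorem~\ref{main} applied coordinatewise.

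\textbf{Step 3} is the matricial version of Step~3 of Theorem~\ref{main}, and is where the proof departs from the line-bundle case. Decompose $t=r\cdot e^{i\theta}$ with $r\in\RP^n$. On $U\cap(\rho\cdot U)$ the radial action of $\rho\in\RP^n$ on the frame is recorded by a continuous $\GL_k(\C)$-valued function $F$ via $(\rho\cdot s_j)(x)=\sum_l F(\rho,x)_{lj}\,s_l(x)$; because $\rho\cdot s_j$ need not be a multiple of $s_j$, the scalar cocycle $f(\rho,x)$ of Theorem~\ref{main} is replaced here by a genuine matrix cocycle. From $(\rho_1\rho_2)\cdot s_j=\rho_1\cdot(\rho_2\cdot s_j)$ one obtains the matrix analogue of~\eqref{fmultip}; running the $\T$-translation computation of Theorem~\ref{main} entrywise, together with the same continuity and connectedness step (approximating a point of a connected overlap by points $t_i\cdot x$), shows that $F(\rho,x)$ is independent of $x$; and evaluating at $x=x_0$ identifies $F(\rho)$ with the matrix of $\rho$ acting on $\ME(x_0)$, so $F\colon\RP^n\to\GL_k(\C)$ is a continuous homomorphism commuting with $\diag(\xi(\cdot))$ which, together with $e^{i\theta}\mapsto\diag(\xi(e^{i\theta}))$, reassembles into $\chi$ itself. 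Consequently $(t\cdot s_j)|_{U\cap(t\cdot U)}=\sum_l\chi(t)_{lj}\,s_l$ for all $t\in\T$, which is precisely the cocycle identity needed to extend $s_1,\dots,s_k$ to a frame of $\ME|_{U_I}$ over $U_I=\T\cdot U$: the assignment $t\cdot x\mapsto\big(v\mapsto t\cdot\varphi^{-1}(x,\chi(t)^{-1}v)\big)$, for $x\in U$ and $v\in\ME(x_0)\cong\C^k$, is well defined by the $x$-independence of $F$ and this identity --- checked on triple overlaps $U\cap(t\cdot U)\cap(\rho\cdot U)$ exactly as in Theorem~\ref{main} --- and is a $\T$-equivariant trivialization $\ME|_{U_I}\cong U_I\times\ME(x_0)$. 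Hence $\ME$ is a topological Klyachko vector bundle, and the smooth statement follows with ``continuous'' replaced by ``smooth'' throughout.

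I expect Step~3 to be the main obstacle: the compact torus $G$ can be put in diagonal form but the radial directions $\RP^n$ cannot in general, so one is forced to carry the $\GL_k(\C)$-valued cocycle $F(\rho,x)$ in place of the scalar $f(\rho,x)$, and the three facts borrowed from Theorem~\ref{main} --- the cocycle relation~\eqref{fmultip}, the independence of the cocycle from $x$, and the well-definedness of the extended frame --- must all be re-established entrywise; diagonalizing the $G$-action first (Step~2) is what keeps this matrix bookkeeping parallel to the line-bundle case and makes the reassembled $\chi$ manifestly a representation. Note that the construction yields an equivariant trivialization of $\ME|_{U_I}$ with fiber the $\T$-representation $\ME(x_0)$, rather than a $\T$-eigenframe; the latter exists only when $\chi$ is diagonalizable.
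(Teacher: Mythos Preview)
Your proposal is correct and follows essentially the same three-step strategy as the paper: Haar-average to a $G$-equivariant trivialization over a $G$-invariant ball $U$, diagonalize $\chi|_G$ to obtain a $G$-eigenframe, and replace the scalar cocycle $f(\rho,x)$ by a $\GL_k(\C)$-valued cocycle $F(\rho,x)$ which is then shown to be independent of $x$.

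The one genuine difference is at the end. The paper adds a fourth step: it picks a matrix $P\in\GL_k(\C)$ carrying the $G$-eigenbasis $s(x_0)$ to a $\T$-eigenbasis of $\ME(x_0)$, so that the conjugates $\tilde A=PAP^{-1}$ and $\tilde B=PBP^{-1}$ are simultaneously diagonal, and hence the frame $\tilde s=Ps$ extends to a $\T$-\emph{eigenframe} over $U_I$. You instead stop after identifying $F(\rho)$ with the matrix of $\chi(\rho)$ in the basis $s(x_0)$ and package the answer as an equivariant trivialization $\ME|_{U_I}\cong U_I\times\ME(x_0)$ with fiber the (possibly non-diagonal) $\T$-representation $\chi$. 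For the theorem as stated---equivariant triviality over each $U_I$---your version suffices and is a shade more robust; the paper's Step~4 buys the stronger conclusion of a $\T$-eigenframe, which is what feeds into the filtration description of \cite{Cui25}.

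One small gap to be aware of in your Step~3: you say the $x$-independence of $F(\rho,x)$ follows by ``running the $\T$-translation computation of Theorem~\ref{main} entrywise,'' but in rank one the scalars $f$ and $\chi$ commute automatically, whereas here the computation needs the commutation $F(\rho,x)\,\diag(\xi(e^{i\theta}))=\diag(\xi(e^{i\theta}))\,F(\rho,x)$ as an \emph{input}, not a consequence. The paper isolates this as a separate identity (its equation~\eqref{AB=BA}) and uses it alongside the multiplicativity~\eqref{Amultip} to carry the translation argument through; you should establish it before, not after, the $x$-independence.
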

\begin{proof} Given $I\in \Si^{(n)}$ and $x_{0}$ the unique point fixed by the action of $\T$ on $U_{I}$. Let $G=(S^{1})^{n}$ be the maximal compact subgroup of $\T$. 

\bigskip
{\bf Step 1:} Following Step 1 of the proof of Theorem~\ref{main}, we can find a $G$-invariant open ball $U=B(x_{0},\epsilon)$ over which there is a $G$-equivariant homeomorphism \[\varphi:\pi^{-1}(U)\to U\times \C^{k}.\]

\bigskip
{\bf Step 2:} Define the section $s_{i}:U\to \ME|_U$ by  \[s_{i}(u):=\varphi^{-1}((u,e_{i})) \qquad \forall\; u\in U,\] where $e_{i}$ is the $i$-th standard basis element of $\C^{k}$, and define $$s(u):=(s_{1}(u),\dots,s_{k}(u))^T.$$ As in Step 2 of the proof of Theorem~\ref{main}, we can check that $\{s_{1},\dots,s_{k}\}$ forms a local $G$-eigenframe, 
that is \[ (g\cdot s_{i})(u)=\chi_{i}(g)s_{i}(u) \qquad \forall \; g\in G,\; u\in U\] for some continuous characters $\chi_{i}:\T\to \CS$.
Equivalently, \begin{equation}\label {bse}s(e^{i\theta}\cdot x)= B(e^{-i\theta}) (e^{i\theta}\cdot (s(x))) \qquad \forall\; e^{i\theta}:=(e^{i\theta_1},\dots, e^{i\theta_n})\in G,\;  x\in U,\end{equation} 
where \[B:G\to \GL(n,\C)\qquad g\mapsto \diag(\chi_{1}(g),\cdots,\chi_{k}(g)).\]

\bigskip
{\bf Step 3:} Decompose $t\in \T=(\CS)^{n}$ as $t=r\cdot e^{i\theta}$, where \[r:=(r_{1},\dots,r_{n})\in \RP^{n},\qquad e^{i\theta}:=(e^{i\theta_{1}},\dots,e^{i\theta_{n}})\in G.\] Define $A\colon S:=\{(\rho,x)\in \RP^{n}\times U|x\in \rho\cdot U \}\to \GL(k,\C)$  to be the continuous map satisfying 
    \[(r\cdot s)(x):=r\cdot (s(r^{-1}\cdot x))=A(r,x)s(x)\qquad  \forall\; r\in\RP^{n},\;x\in U\bigcap (r\cdot U), \]
    or equivalently,
    \begin{align} \label{srx}
        s(r\cdot x)=A(r^{-1},x) r\cdot (s(x)) \qquad \forall\; r\in\RP^{n},\; x\in U\bigcap (r^{-1}\cdot U).
    \end{align}
    By the same argument leading to \eqref{fmultip}, we can show that   
    \begin{equation}\label{Amultip} A(r_{1},x)A(r_{2},x)=A(r_{1}\cdot r_{2},x)=A(r_{2}\cdot r_{1},x)=A(r_{2},x)A(r_{1},x)\end{equation} for any $r_{1},r_{2}\in \RP^{n},\; x\in (r_{1}\cdot U)\bigcap (r_{2}\cdot U)\bigcap (r_{1}\cdot r_{2}\cdot U)$ using that $\{s_{1}(x),\cdots, s_{k}(x)\}$ forms a $\C$-basis of the fiber $\ME(x)$. One can similarly see that \begin{equation} \label{AB=BA} 
A(r,x)B(e^{i\theta})=B(e^{i\theta})A(r,x)\qquad \forall \; e^{i\theta}\in G,\; r\in \RP^n, \; x\in U\bigcap (r\cdot U). \end{equation}
We also have that for any $t=r\cdot e^{i\theta}\in \T, \; x\in U\bigcap (r^{-1}\cdot U)$
\begin{align}
    s(t\cdot x)&=s(e^{i\theta}\cdot r\cdot x)\notag \\ 
    &=B(e^{-i\theta})(e^{i\theta}\cdot (s(r\cdot x))) \qquad (\text{by \eqref{bse}})\notag \\ 
    &= B(e^{-i\theta})(e^{i\theta}\cdot( A(r^{-1},x)r)\cdot (s(x))) \qquad (\text{by \eqref{srx}}) \notag \\ 
    &=B(e^{-i\theta})A(r^{-1},x) t\cdot(s(x)) , \label{eq:final2}
    \end{align}    
    and for any $t=r\cdot e^{i\theta} \in \T,\; \rho\in \RP^{n}, \; x\in U\bigcap (r^{-1}\cdot U)\bigcap (r^{-1}\cdot \rho \cdot U)$,
    \begin{align*}
        A(\rho,t\cdot x)s(t\cdot x)&=(\rho\cdot s)(t\cdot x) \qquad (\text{by the definition of $A$})\\&= \rho \cdot(s(\rho^{-1}\cdot t\cdot x))\\
        &=\rho \cdot (B(e^{-i\theta})A( r^{-1}\cdot \rho,x)\rho^{-1}\cdot t\cdot (s(x))) \qquad (\text{by \eqref{eq:final2}})\\
        &=A(\rho,x) B(e^{-i\theta})A(r^{-1},x)t\cdot (s(x))\qquad (\text{by \eqref{Amultip}, \eqref{AB=BA}})\\
        &=A(\rho,x)s(t\cdot x) \qquad (\text{by \eqref{eq:final2}}).
    \end{align*}
    Using that $\{s_{1}(t\cdot x),\cdots, s_{k}(t\cdot x)\}$ forms a $\C$-basis of the fiber $\ME(t \cdot x)$, we have shown that 
    \[A(\rho,x)=A(\rho,t\cdot x) \qquad \forall \; t\in \T, \; x\in U\bigcap (\rho \cdot U) \text{ such that } t\cdot x\in U\bigcap (\rho\cdot U). \]
    By the same argument as in Step 3 of the proof of Theorem \ref{main}, the identity above leads us to \[A(\rho,x)=A(\rho,y) \qquad \forall \; x,y\in U\bigcap (\rho\cdot U) \] that is $A(\rho, x)$ is independent of $x$, so we 
    will denote it simply by $A(\rho)$.
    
    \bigskip
{\bf Step 4:} 
Let $P\in \GL(k,\C)$ be chosen, so that  the coordinates of $$\Tilde{s}(x_{0}):=P \, s(x_{0})$$ form a $\T$-eigenbasis of the fiber $\ME(x_{0})$ (note that the coordinates of $s(x)$ form a $G$-eigenbasis of the fiber $\ME(x_0)$).  
Define 
\[\Tilde{s}(x):=P\, s(x) \qquad \forall \; x\in U,\] 
\[\Tilde{A}(r):=P\, A(r)\,P^{-1},\quad  \Tilde{B}(g):=P\, B(g)\, P^{-1} \qquad \forall \; r\in \RP^{n}, \; g\in G. \]
By the choice of $P$, we know $\Tilde{A}, \Tilde{B}$ must be diagonal matrices. 
Moreover, by \eqref{eq:final2} \[ (t\cdot \Tilde{s})(x)=\Tilde{A}(r)\, \Tilde{B}(e^{i\theta}) \,\Tilde{s}(x) \qquad \forall \; t=r\cdot e^{i\theta}\in \T,\; x\in U\bigcap (r\cdot U),  \]
or equivalently, 
\[ \Tilde{s}(t\cdot x)=\Tilde{A}(r^{-1})\, \Tilde{B}(e^{-i\theta})\, t\cdot (\Tilde{s}(x)) \qquad \forall \;t=r\cdot e^{i\theta}\in \T, x\in U\bigcap (r^{-1}\cdot U). \]
Extend $\Tilde{s}$ to the section $\Tilde{s}:U_{I}\to \ME|_{U_I}$ by defining
\[ \Tilde{s}(t\cdot x):=\Tilde{A}(r^{-1})\, \Tilde{B}(e^{-i\theta})\, t\cdot (\Tilde{s}(x)) \qquad \forall \; t=r\cdot e^{i\theta}\in \T, \; x\in U. \]
Let $\Tilde{\chi_{i}}(t)$ be the $(i,i)$-entry of the matrix $\Tilde{A}(r)\, \Tilde{B}(e^{i\theta})$ for $t=r\cdot e^{i\theta}$. Then the $i$-th coordinate $\Tilde{s}_{i}$ of $\Tilde{s}$ is a $\T$-eigensection over $U_{I}$ with the eigenfunction being the continuous character $\Tilde{\chi_{i}}$. Since $\{s_{1},\dots, s_{k}\}$ forms a $G$-eigenframe and $\Tilde{s}=P\, s$, $\{\Tilde{s}_{1},\dots,\Tilde{s}_{k}\}$ must be a $\T$-eigenframe, and hence $\ME |_{U_{I}}$ is $\T$-equivariantly homeomorphic to $U_{I}\times \C^{k}$ showing that $\ME$ is a topological Klyachko vector bundle.
\end{proof}

\begin{theo} \label{main3}
   Let $X$ be the $2n$-dimensional topological toric manifold associated with a topological fan $\De$ and $\pi: \ME\to X$ a smooth $\T$-equivariant complex vector bundle, then $\ME$ is a smooth Klyachko vector bundle. \qed
\end{theo}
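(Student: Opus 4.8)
The plan is to repeat, essentially verbatim, the four-step argument in the proof of Theorem~\ref{main2}, replacing ``continuous'' by ``smooth'' throughout, and to check that each construction there stays within the smooth category. The conclusion then follows from the smooth analogue of the classification theorem, \cite[Theorem 4.5]{Cui25}.

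\textbf{Step 1 (the averaging).} One begins with an ordinary smooth (not yet equivariant) trivialization $\phi\colon\pi^{-1}(W)\to W\times\C^{k}$, which after a constant change of frame may be assumed to diagonalize the $G$-action on the fiber $\ME(x_{0})$. Averaging the given Riemannian metric over $G$ against the Haar measure $\mu$ produces $\langle\,,\rangle_{G}$, which is again smooth because $G$ is compact and $(h,v,w)\mapsto\langle d\rho_{h}(v),d\rho_{h}(w)\rangle$ is jointly smooth; hence the geodesic ball $U=B(x_{0},\epsilon)$ is a genuine smooth $G$-invariant chart. The averaged bundle map $\varphi$, given in local coordinates by $\OPHI_{u}(e)=\int_{G}\chi(g^{-1})\phi_{g\cdot u}(g\cdot e)\,d\mu(g)$ (with $\chi$ replaced by the matrix-valued fiber representation when $k>1$), is smooth in $u$: the integrand depends smoothly on the base point and, since $G$ is compact, one may differentiate under the integral sign, so every partial derivative of $\OPHI_{u}$ is again the integral over $G$ of a smooth function. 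As in the topological case $\OPHI_{x_{0}}=\phi_{x_{0}}$, so after shrinking $\epsilon$ the map $\varphi$ is a smooth $G$-equivariant trivialization over $U$.

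\textbf{Steps 2--4.} These carry over unchanged. The frame sections $s_{i}(u)=\varphi^{-1}(u,e_{i})$ are smooth because $\varphi$ is; the characters $\chi_{i}$ of Step~2 and the map $A(r,x)$ of Step~3 — obtained by expressing $g\cdot s(x)$ and $r\cdot s(r^{-1}\cdot x)$ in the smoothly varying basis $\{s_{j}(x)\}$ of $\ME(x)$ — are smooth because the $\T$-action on $\ME$ is smooth; and the argument showing $A(\rho,x)$ is independent of $x$ used only continuity of $A$ together with the density of a $\T$-orbit in $U\cap(\rho\cdot U)$, so it is unaffected. In Step~4 the constant matrix $P$ is chosen exactly as in the proof of Theorem~\ref{main2}, and the extended frame $\tilde s(t\cdot x):=\tilde A(r^{-1})\,\tilde B(e^{-i\theta})\,t\cdot(\tilde s(x))$ is smooth on all of $U_{I}=\T\cdot U$, since $\tilde A$, $\tilde B$ and the action map $\T\times\ME\to\ME$ are smooth; this produces a smooth $\T$-equivariant trivialization of $\ME|_{U_{I}}$ for every $I\in\Si^{(n)}$, so $\ME$ is a smooth Klyachko vector bundle.

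\textbf{Main point.} The only step requiring genuine (if routine) verification is Step~1: the smoothness of the $G$-averaged metric and, especially, of the $G$-averaged trivialization $\varphi$. Once the interchange of differentiation and integration is justified — which is immediate from the compactness of $G$ and the joint smoothness of the integrand — everything else is a purely formal transcription of the proof in the topological category.
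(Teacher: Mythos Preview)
Your proposal is correct and follows the same four-step template as the paper, with one genuine difference in how smoothness of the final eigenframe is secured. You track smoothness through every construction: the averaged metric, the averaged trivialization $\varphi$ (via differentiation under the integral over the compact group $G$), the sections $s_i$, the matrix $A(r,x)$, and finally the diagonal entries $\tilde\chi_i$ of $\tilde A(r)\tilde B(e^{i\theta})$. The paper instead takes a shortcut at the last step: having produced the continuous $\T$-eigenframe $\{\tilde s_1,\dots,\tilde s_k\}$ with continuous eigencharacters $\tilde\chi_i$, it simply invokes Lemma~1.1 of \cite{Cui25} (every continuous character of $\T$ is smooth, a fact also recorded in Section~\ref{background}) to conclude that each $\tilde\chi_i$ is smooth, and hence the eigenframe is smooth. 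Your route is more self-contained but requires the (routine) justification you mention in Step~1; the paper's route is shorter and relies on the structural fact about characters of $(\C^*)^n$, avoiding any need to verify smoothness of $A$ or of $\tilde\chi_i$ directly.
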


\begin{proof}
The proof is almost the same as that of Theorem~\ref{main2}. We keep the notations and point out the slight modifications below. Given $I\in \Si^{(n)}$, in step 1 we pick a $G$-invariant open ball $U=B(x_{0},\epsilon)$ over which there is a $G$-equivariant isomorphism \[\varphi:\pi^{-1}(U)\to U\times \C^{k}.\] Step 2 and step 3 are the same except that the sections $s_{i}$ are smooth. In step 4, we construct a continuous $\T$-eigenframe $\{\Tilde{s}_{1},\dots,\Tilde{s}_{k}\}$ over $U_{I}$ where the eigenfunction of $\Tilde{s}_{i}$ is a continuous character $\Tilde{\chi}_{i}$ as before. By Lemma 1.1 of~\cite{Cui25}, $\Tilde{\chi}_{i}$ is smooth for all $i$, hence $\{\Tilde{s}_{1},\dots,\Tilde{s}_{k}\}$ is a smooth $\T$-eigenframe and so gives a smooth $\T$-equivariant trivialization of $\ME |_{U_{I}}$. 
\end{proof}

\end{document}